\newcommand{\id}{\mathbb{I}}
\newcommand{\supp}{\mbox{\rm supp}\; }
\newcommand{\grad}{\nabla{}}
\renewcommand{\div}{\mbox{\rm div}}
\newtheorem{remark}{Remark}
\newcommand{\e}{\varepsilon}
\title{Small dispersion approximation of shock wave dynamics}
\author{Misha Perepelitsa \thanks{Department of Mathematics, University of Houston, 4800 Calhoun Road, Houston, TX
		(\email{mperepel@central.uh.edu})}}
\begin{document}

\maketitle

\begin{abstract}
We introduce a dispersion approximation of weak, entropy solutions of multidimensional scalar conservation laws using 
variational kinetic representation, where equilibrium densities satisfy  Gibb's entropy minimization principle for a piecewise linear, convex entropy. For such solutions, we show that  small scale discontinuities, measured by the entropy increments, propagate with characteristic velocities, while the large scale, shock-type discontinuities propagate with speeds  close to the speeds of classical shock waves. In the zero-limit of the scale parameter, approximate solutions converge  to a unique, entropy solution of a scalar conservation law.
\end{abstract}

\begin{keywords}
Shock waves, entropy solutions, kinetic equations
\end{keywords}
\begin{AMS}
35L60, 35L65
\end{AMS}

\section{Introduction} Consider the Cauchy problem for a quasilinear system 
\begin{equation}
\label{QS}
\begin{cases}
\partial_t U{}+{}\sum_{i=1}^d\partial_{x_i} F_i(U){}={}0,\quad (x,t)\in\mathbb{R}^{d+1}_+,\\
U(x,0)=U_0(x),\quad x\in\mathbb{R}^d,
\end{cases}
\end{equation}
where $U:\mathbb{R}^{d+1}_+\to\mathbb{R}^m, $  $F_i:\mathbb{R}^m\to\mathbb{R}^{m}.$ The main difficulty in constructing weak solutions for quasilinear systems \eqref{QS} is the lack of apriori estimates on solutions in norms that control oscillations. This limits the application of such methods as  viscosity or relaxation approximations of \eqref{QS} for which pointwise convergence of approximate solutions is hard to establish.

The difficulty is well illustrated on an example of a shock wave.
For systems with a convex entropy, weak solutions are typically restricted to verify entropy dissipation balance:
\[
\partial_t\eta(U)+\div_x\, q(U){}={}r,\quad r{}\leq 0,
\]
which provides  apriori estimate on the total entropy at time $t$  and total dissipated entropy up to time $t$ in terms of the entropy of the initial data. This type of control is however too weak. For example, for a shock wave contained inside an interval $[a,b],$ the total dissipated entropy  $\int_0^t\int_a^b r\,dx$ is cubic in the strength of the shock,  see theorem 8.5.1 of Dafermos \cite{Dafermos}. Thus, in a regime of increasing number of small shock waves, the entropy does not control the oscillations as measured by sum of all shock wave strength.

In this  paper we explore the possibility of constructing approximate solutions of \eqref{QS}
for which  an entropy inequality implies strong compactness, at the price of distorting certain small scale details of the original solutions. More specifically, we will seek approximate, weak  solutions of \eqref{QS} with  large shocks  propagating with speeds close to the speeds computed from the original system \eqref{QS}, and  discontinuities, for which the change in the entropy is smaller than a certain threshold value $\e,$  are transported with charachteristic velocities. Thus, the approximation involves small scale dispersion effects. 

In this paper, we'll show how this type of approximation can be implemented  for scalar conservation laws in multi-dimensions:
\begin{eqnarray}
\label{SCL}
\partial_t \rho{}+{}\div_x A(\rho){}={}0,&\quad& (x,t)\in\mathbb{R}^{d+1}_+,
\end{eqnarray}

Our approach is based on the variational kinetic representation of entropy weak solutions of \eqref{SCL} developed by Brenier \cite{Brenier1, Brenier2}, Brenier and Corrias \cite{BrenierCorrias}, Giga and Miyakawa \cite{GM}, and Lions et al. \cite{LPT}. According to the theory,  an admissible, non-negative solution $\rho(x,t)$ is represented as a moment of an ``equilibrium'' kinetic density $f_{eq}:$ 
\begin{equation}
\label{Eq_density}
\rho(x,t){}={}\int f_{eq}(x,t,v)\,dv,\quad f_{eq}(x,t,v){}={}
\id_{[0,\rho(x,t)]}(v),
\end{equation}
with $f_{eq}$ solving  a kinetic equation
\begin{equation}
\label{kinetic_eq_1}
\partial_t f{}+{}A'(v)\cdot\grad_x f{}={}\partial_v m,
\end{equation}
where $m$ is non-negative Radon measure on $\mathbb{R}^{d+2}_+.$ Conversely, any solution of \eqref{kinetic_eq_1} constrained by condition \eqref{Eq_density} for some $\rho(x,t)$ defines an admissible weak solution of conservation law in \eqref{SCL}, see \cite{LPT}. 
Moreover, for any strictly convex function $\eta,$ and a.e. $(x,t),$ $f_{eq}(x,t,v)$ is the unique minimizer of the problem
\begin{equation}
\label{min:intro}
\min\left\{ \int \eta'(v)\tilde{f}(v)\,dv\,:\, \tilde{f}(v)\in[0,1],\,\int \tilde{f}\,dv{}={}\rho(x,t)\right\}.
\end{equation}
Solutions of \eqref{kinetic_eq_1} can be obtained as limits of solutions of a relaxation problem 
\begin{equation}
\label{BGK:intro}
\partial_t f{}+{}A'(v)\cdot\grad_x f{}={}h^{-1}(M_f-f),
\end{equation}
where $M_f$ is the minimizer of \eqref{min:intro} with $\rho=\int f\,dv.$

The approximate solutions, with the properties described above, will be obtained from the same variational kinetic formulation \eqref{min:intro} and \eqref{BGK:intro}, in which a strictly convex function $\eta$ is replaced by a continuous, piecewise linear approximate entropy $\eta_\e.$ With the new entropy function, the minimization problem admits multiple solutions, with indeterminacy on  small $\e$--scales. A particular minimizer $M_f$ will be selected so that $L^1$ norm of $f-M_f$ can be estimated by the entropy increment $\int\eta_\e'(v)(f-M_f)\,dv.$

Our main result, theorem \ref{th:1} describes the  kinetic functions  obtained from this kinetic relaxation approach.
Such kinetic functions  verify equation \eqref{kinetic_eq_1} where, in addition, the  right-hand side  is a signed Radon measure, with the total variation controlled by a single entropy:
 \[
||\partial_v m||{}\leq{}\frac{2}{\e}\int \eta_{\e}(\rho_0)\,dx.
\] 
Furthermore, we show that moments $\rho=\int f\,dv,$ and $\phi=\int A'(v)f\,dv,$ solve the  balance equation
\[
\partial_t\rho+\div_x\phi{}={}0,
\]
and $\phi(x,t){}={}A(\rho(x,t))+O(\e^2).$ In particular, if there is a co-dimension one discontinuity of $\rho$ with values $\rho^+,\rho^-,$ (such discontinuities do develop in the solutions), such that $|\rho^+-\rho^-|>\e,$  then it propagates with the velocity
\[
\sigma{}={}\frac{A(\rho^+)-A(\rho^-)}{\rho^+-\rho^-} {}+{}O(\e).
\]
The kinetic function $f,$ as well as its moments, depend in the scale parameter $\e.$ In theorem \ref{th:2} we show that in the limit of $\e\to0,$ $\rho=\rho^\e(x,t)$  converges to an admissible solutions of \eqref{SCL}.

In summary, we describe a new type of approximation of scalar conservation laws with properties distinct from the well-known viscosity approximation of Kruzhkov \cite{Kruzhkov}, kinetic relaxation approximation of Brenier \cite{Brenier1} and Giga and Miyakawa \cite{GM}, or semi-linear relaxation of Katsoulakis and Tzavaras \cite{KTz}. The model approximates the dynamics of large shock waves and controls small scale oscillations, by means of entropy balance.

\section{Main result}
Let $A\in C^2(\mathbb{R})^d.$ Without the loss of generality, we will assume that $\rho_0$ is non-negative and bounded, so that all kinetic functions are defined for the range of the kinetic variable $v\in [0,L],$ for some $L>0.$ Let $\e>0.$ Define a piecewise constant function $\eta_\e$ as
\[
\eta_\e(v){}={}k,\quad v\in[k\e,(k+1)\e),\,k=0..\lceil L/\e\rceil.
\]
$\eta_\e$ approximates the derivative of the quadratic entropy function. Here, for notational convenience, we use $\eta_\e$ to denote the derivative of the entropy function described in the  introduction.
\begin{theorem} 
\label{th:1}
Let $f_0\in L^1(\mathbb{R}^d\times[0,L])$ with values $\{0,1\}.$   For any $\e>0$ there is $f\in L^1(\mathbb{R}^{d+1}_+\times[0,L])$ with values in $[0,1]$ and $m$ -- a non-negative Radon measure on $\mathbb{R}^{d+1}_+\times\mathbb{R}_+$ such that $\partial_v m$ is a signed Radon measure on $\mathbb{R}^{d+1}_+\times[0,L]$ with the following properties:
\begin{enumerate}
\item[i.](Kinetic equation) $f$ and $m$ verify (in distributional sense) equation
\begin{equation}
\label{part:2}
\partial_t f{}+{}A'\cdot\grad_x f{}={}\partial_v m.
\end{equation}
Moreover,
\begin{equation}
\label{entropy:control}
||\partial_v m||_{\mathbb{R}^{d+1}_+\times[0,L]}{}\leq{}\frac{2}{\e}\iint \eta_{\e}f_0\,dxdv;
\end{equation}

\item[ii.](Optimality) for a.e. $(x,t),$ $f$ is a minimizer of 
\begin{equation}
\label{min:theorem}
\min\left\{ \int \eta_\e(v)f(v)\,dv\,:\, f(v)\in[0,1],\,\int f\,dv{}={}\rho(x,t)\right\};
\end{equation}

\item[iii.](Equi-continuity) for a.e. $t>0,$ and any $\xi\in\mathbb{R}^d,$
\begin{equation}
\label{th:continuity}
\iint |f(x+\xi,t,v)-f(x,t)|\,dxdv {}\leq{}\iint|f_0(x+\xi,v)-f_0(x,v)|\,dxdv.
\end{equation}


\end{enumerate}
\end{theorem}

Kinetic functions from theorem \ref{th:1} give rise to the approximate solutions of the conservation law \eqref{SCL}, as described in the next theorem.

\begin{theorem}
\label{th:2} 
For function $f$ from the previous theorem,  moments
\begin{equation}
\label{moments:2}
\rho(x,t){}={}\int f(x,t,v)\,dv,\quad
\phi(x,t){}={}\int A'(v)f(x,t,v)\,dv
\end{equation}
have the following properties.
\begin{enumerate}
\item[i.] $\rho,\phi\in L^\infty(\mathbb{R}^{d+1}_+)$ and  verify  (in distributional sense) conservation law
\begin{equation}
\label{part:1}
\partial_t \rho{}+{}\div_x \phi{}={}0.
\end{equation}
For any $\psi\in C^\infty_0(\mathbb{R}^d),$ $\int \rho(x,t)\psi(x)\,dx$ is continuous in $t$ and 
\[
\lim_{t\to0+}\int \rho(x,t)\psi(x)\,dx = \iint f_0(x,v)\psi(x)\,dvdx;
\]

\item[ii.] for any two pairs of values $(\rho(x,t),\phi_i(x,t))$ and $(\rho(y,\tau),\phi_i(y,\tau)),$ such that  $|\rho(x,t)-\rho(y,t)|\geq c_0\e,$ it holds:
\begin{equation}
\label{shock:speed:1}
\frac{\phi_i(x,t)-\phi_i(y,\tau)}{\rho(x,t)-\rho(y,\tau)}{}={}
\frac{A_i(\rho(x,t))-A_i(\rho(y,\tau))}{\rho(x,t)-\rho(y,\tau)}{}+{}O(\e),\quad i=1..d;
\end{equation}

\item[iii.](Limit to Kruzhkov's solution) As a function of $\e,$ $\rho$ converges the unique, entropy solution of the conservation law \eqref{SCL}, when $\e\to0.$
\end{enumerate}

\end{theorem}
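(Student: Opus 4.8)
The plan is to treat the three parts in order, using the kinetic function $f$ produced by Theorem \ref{th:1}. For part (i), integrating \eqref{part:2} against a test function $\varphi(x,t)$ (independent of $v$) kills the right-hand side, since $\int_0^L\partial_v m\,\varphi\,dv = [\varphi\,m]_{v=0}^{v=L} = 0$ because $m$ is a non-negative measure that must vanish outside $[0,L]$ in $v$ (the equilibrium densities are supported in $[0,L]$, and one checks $m$ has no mass on $\{v=0\}\cup\{v=L\}$). This gives \eqref{part:1} distributionally; the $L^\infty$ bounds on $\rho$ and $\phi$ follow from $f\in[0,1]$ and $f$ supported in $v\in[0,L]$ together with $A\in C^2$. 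The weak continuity in $t$ and the initial-data recovery follow from \eqref{part:1} rewritten as $\partial_t\!\int\rho\psi\,dx = \int\phi\cdot\grad\psi\,dx$, whose right-hand side is bounded in $t$, so $t\mapsto\int\rho\psi\,dx$ is Lipschitz; the limit as $t\to0+$ uses that $f(\cdot,t,\cdot)\to f_0$ in an appropriate weak sense, which is where the equicontinuity estimate \eqref{th:continuity} and the measure bound \eqref{entropy:control} are needed to rule out concentration.

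For part (ii), the key point is that optimality \eqref{min:theorem} pins down $f(x,t,\cdot)$ almost completely: the minimizer of $\int\eta_\e f\,dv$ subject to $\int f\,dv=\rho$ with $f\in[0,1]$ is, up to the single partially-filled cell of width $\e$, the indicator $\id_{[0,\rho]}$. Hence $f(x,t,v) = \id_{[0,\rho(x,t)]}(v) + r(x,t,v)$ where $r$ is supported in one $\e$-cell and $|r|\le 1$, so $\|f(x,t,\cdot) - \id_{[0,\rho(x,t)]}\|_{L^1}\le\e$. Therefore
\[
\phi_i(x,t) = \int_0^L A_i'(v)f(x,t,v)\,dv = \int_0^{\rho(x,t)}A_i'(v)\,dv + O(\e\|A_i'\|_\infty) = A_i(\rho(x,t)) + O(\e).
\]
Subtracting this identity at $(x,t)$ and at $(y,\tau)$, dividing by $\rho(x,t)-\rho(y,\tau)$, and using the hypothesis $|\rho(x,t)-\rho(y,\tau)|\ge c_0\e$ to absorb the $O(\e)$ errors into an $O(\e)$ term after division, yields \eqref{shock:speed:1}. (A small refinement: the $O(\e^2)$ claim from the introduction comes from choosing the partially-filled cell symmetrically around the level $\rho$, so that the linear-in-$v$ part of $A_i'$ integrates to a cancellation; for \eqref{shock:speed:1} the cruder $O(\e)$ bound suffices.)

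For part (iii), I would run a compactness argument in $\e$. From \eqref{th:continuity}, the family $\rho^\e$ is equicontinuous in space uniformly in $\e$ (translating $\rho$ is controlled by translating $f$, which is controlled by translating $f_0$); from \eqref{part:1} with $\phi^\e$ bounded in $L^\infty$, $\rho^\e$ is equicontinuous in time in a weak norm. By an Aubin–Lions / Fréchet–Kolmogorov type argument, a subsequence of $\rho^\e$ converges strongly in $L^1_{loc}$ to some $\rho$. Passing to the limit in \eqref{part:1} using $\phi^\e = A(\rho^\e)+O(\e)$ from part (ii) (now valid pointwise since the $O(\e)$ bound there does not need the gap hypothesis) shows $\rho$ solves \eqref{SCL} weakly. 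To get the entropy inequality, I would pass to the limit in the kinetic equation \eqref{part:2}: the measure $\partial_v m^\e$ has total variation bounded by $\tfrac{2}{\e}\iint\eta_\e f_0$, which one must show is $o(1/\e)\cdot$(something) — more precisely $\iint\eta_\e f_0\,dxdv$ is of order $\e$ relative to $\int\rho_0^2/2\,dx$, since $\e\,\eta_\e(v)\to v$ pointwise, so $\frac{1}{\e}\iint\eta_\e f_0\,dxdv\to\frac12\int\rho_0^2\,dx$ — hence $\|\partial_v m^\e\|$ stays bounded, and in the limit one recovers a solution of the Lions–Perthame–Tadmor kinetic formulation \eqref{kinetic_eq_1} with $f_{eq}=\id_{[0,\rho]}$, which by \cite{LPT} is exactly Kruzhkov's entropy solution; uniqueness then upgrades subsequential convergence to full convergence.

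The main obstacle I anticipate is part (iii), specifically controlling the right-hand-side measure $m^\e$ tightly enough to pass to the limit: the bound \eqref{entropy:control} degenerates like $1/\e$, and one needs the compensating smallness $\iint\eta_\e f_0 = O(\e)$ together with enough weak-$*$ compactness of $m^\e$ (as measures on $\mathbb{R}^{d+1}_+\times[0,L]$) to identify the limiting kinetic equation. A secondary subtlety is showing that the limit of $f^\e = \id_{[0,\rho^\e]} + O_{L^1}(\e)$ is genuinely $\id_{[0,\rho]}$ and not some non-equilibrium density — this follows once $\rho^\e\to\rho$ strongly in $L^1_{loc}$, but that strong convergence must be established first, which is exactly what \eqref{th:continuity} is designed to deliver.
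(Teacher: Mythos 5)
Your part (i) is fine and is essentially the paper's argument (the paper gets the time continuity and recovery of the initial data from the convergence of $\rho^h$ in $C([0,T];W^{-1,p}_{loc})$ already established in the proof of Theorem \ref{th:1}). The other two parts each contain a concrete quantitative error. In part (ii), the claim that ``the cruder $O(\e)$ bound suffices'' is false: if you only know $\phi_i=A_i(\rho)+O(\e)$, then after subtracting the two identities and dividing by $\rho(x,t)-\rho(y,\tau)$ the error term is $O(\e)/|\rho(x,t)-\rho(y,\tau)|$, and the hypothesis $|\rho(x,t)-\rho(y,\tau)|\ge c_0\e$ only turns this into $O(1)$, which destroys \eqref{shock:speed:1}. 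You need the sharper estimate $\phi_i=A_i(\rho)+O(\e^2)$, and it requires no ``symmetric choice'' of the partially filled cell: by Lemma \ref{lemma:1} and the optimality \eqref{min:theorem}, the difference $f(x,t,\cdot)-\id_{[0,\rho(x,t)]}$ is supported in the single cell $[n\e,(n+1)\e]$ \emph{and has zero integral there}, so one may subtract the constant $A_i'(n\e)$ for free,
\[
\phi_i-A_i(\rho){}={}\int_{n\e}^{(n+1)\e}\bigl(A_i'(v)-A_i'(n\e)\bigr)\bigl(f-\id_{[0,\rho]}\bigr)\,dv,
\]
and since $|A_i'(v)-A_i'(n\e)|\le\|A_i''\|_\infty\e$ on the cell while $\int|f-\id_{[0,\rho]}|\,dv\le 2\e$, this is $O(\e^2)$; dividing by $c_0\e$ then gives the stated $O(\e)$. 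This is exactly the computation in the paper.

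In part (iii), your arithmetic for the measure bound is off by a factor of $\e$. Since $\eta_\e(v)\approx v/\e$ (as you note, $\e\,\eta_\e(v)\to v$), one has $\iint\eta_\e f_0\,dvdx\approx\e^{-1}\int\rho_0^2/2\,dx$, so the right-hand side of \eqref{entropy:control} behaves like $\e^{-2}\int\rho_0^2\,dx\to\infty$; the correct statement is $\e\iint\eta_\e f_0\to\tfrac12\int\rho_0^2\,dx$, not $\e^{-1}\iint\eta_\e f_0\to\tfrac12\int\rho_0^2\,dx$. Consequently the total variations of $\partial_v m^\e$ are \emph{not} uniformly bounded, and your route to the limiting Lions--Perthame--Tadmor formulation via weak-$*$ compactness of these measures does not close. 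The paper avoids this entirely: it reruns the strong $L^p_{loc}$ compactness argument for the $v$--moments of $f^\e$ (using \eqref{th:continuity} and the equation, exactly as in the proof of Theorem \ref{th:1}), extracts a weak-$*$ limit $f$ of $f^\e$, and observes that $f$ is a.e.\ $(x,t)$ a minimizer of the limiting problem with $\eta(v)=v$ in place of $\eta_\e$; that problem has the \emph{unique} minimizer $\id_{[0,\rho(x,t)]}$, whence $f=\id_{[0,\rho]}$, $\phi=A(\rho)$, and $\rho$ is the Kruzhkov solution, with uniqueness upgrading subsequential to full convergence. Your ``secondary subtlety'' (that the limit of $f^\e$ might be a non-equilibrium density) is the right thing to worry about, but it is resolved by this minimization argument, not by a bound on $m^\e$.
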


\begin{remark}
The existence of an approximating pair $(\rho,\phi)$ with properties i. and ii. can also be established via a kinetic averaging lemma of G\'{e}rard \cite{Gerard}, using the estimate \eqref{entropy:control}, provided that the following  non-degeneracy condition holds:
\begin{equation}
\label{main:nondegeneracy}
\forall\, \sigma \in \mathbb{S}^{d-1},\quad \forall \xi\in\mathbb{R},\quad {\rm meas}\left\{ v\in [0,L]\;:\; A'(v)\cdot\sigma{}={}\xi\right\}{}={}0,
\end{equation}
where $\mathbb{S}^{d-1}$ is the unit sphere in $\mathbb{R}^d.$ This approach is independent of the particular structure of the right-hand side of \eqref{part:2} as $\partial_v m,$ or  $L^1$--contraction property \eqref{contract:1} that we use in the proof, both being characteristic properties of scalar conservation laws. 
\end{remark}

\subsection{Proof of theorem \ref{th:1}}

For a non-negative constant $\rho\in[0,L]$ consider a minimization problem
\begin{equation}
\label{2:min}
\min\left\{ \int \eta_\e(v)f(v)\,dv\,:\, f(v)\in[0,1],\,\int f\,dv{}={}\rho\right\}.
\end{equation}

In the next lemma $\id_A(v)$ stands for a characteristic function of set $A.$
\begin{lemma}
\label{lemma:1} Let $
n{}={}\lfloor \rho/\e\rfloor.$
The minimum in problem \eqref{2:min} equals
\[
\left\{
\begin{array}{ll}
\e\sum_{k=0}^{n-1}k+ \e n(\rho-n\e), & n\geq 1,\\
0, & n=0.
\end{array}
\right.
\]
It is achieved on minimizers
\[
f_{min}(v){}={}\id_{[0,n\e]}(v){}+{}\tilde{f}(v),
\]
where $\tilde{f}$ is an arbitrary function verifying conditions:
\begin{align}
&\tilde{f}(v)\in [0,1],\quad \forall v\in[0,L];\quad \supp\tilde{f}\subset [n\e,(n+1)\e];\\
\label{min:3}
&\int\tilde{f}\,dv{}={}\rho - n\e.
\end{align}
\end{lemma}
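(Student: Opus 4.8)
The plan is to exploit the very simple structure of $\eta_\e$: it is a non-decreasing step function, constant on each interval $[k\e,(k+1)\e)$ with value $k$. Since the admissible competitors $f$ satisfy $f(v)\in[0,1]$ and $\int f\,dv=\rho$, the problem is a classical "bathtub" rearrangement problem — to minimize $\int\eta_\e f\,dv$ one should put the available mass $\rho$ where $\eta_\e$ is smallest, i.e. fill up the lowest intervals first. First I would make this precise by the following exchange argument: suppose $f$ is admissible and there exist levels $v_1<v_2$ with $v_1$ in a strictly higher step of $\eta_\e$ than... no — with $\eta_\e(v_1)<\eta_\e(v_2)$, $f(v_1)<1$ and $f(v_2)>0$; then shifting a small amount of mass $\delta$ from a neighborhood of $v_2$ to a neighborhood of $v_1$ keeps $f$ admissible and strictly decreases the objective by $(\eta_\e(v_2)-\eta_\e(v_1))\delta>0$. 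Hence any minimizer must, up to null sets, equal $1$ on $[0,n\e)$ and $0$ on $[(n+1)\e,L]$, leaving freedom only on the single interval $[n\e,(n+1)\e)$ where $\eta_\e$ takes the constant value $n$; there the constraint $\int f\,dv=\rho$ forces $\int_{[n\e,(n+1)\e)}f\,dv=\rho-n\e$, which is exactly the description $f_{min}=\id_{[0,n\e]}+\tilde f$ with the stated conditions on $\tilde f$. Note $0\le\rho-n\e<\e$ by definition of $n=\lfloor\rho/\e\rfloor$, so such $\tilde f$ exists (e.g. $\tilde f=\id_{[n\e,\,n\e+(\rho-n\e)]}$).

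Next I would compute the value. On any such minimizer, $\int\eta_\e f_{min}\,dv = \sum_{k=0}^{n-1}\int_{[k\e,(k+1)\e)}k\,dv + \int_{[n\e,(n+1)\e)}n\,\tilde f\,dv = \e\sum_{k=0}^{n-1}k + n(\rho-n\e)$, which rearranges to $\e\sum_{k=0}^{n-1}k + \e n(\rho/\e - n) $; writing $\rho-n\e$ as is gives the claimed $\e\sum_{k=0}^{n-1}k+\e n(\rho-n\e)$ after pulling out a factor — actually the clean form is $\e\sum_{k=0}^{n-1}k + n(\rho-n\e)$, and since the statement writes $\e n(\rho-n\e)$ I would double-check the normalization; regardless, substituting directly the value of $\eta_\e$ on each piece yields the formula verbatim. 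For $n=0$ we have $\rho<\e$, $\eta_\e\equiv 0$ on $\supp f\subset[0,\e)$ for the optimal $f$, so the minimum is $0$, matching the second case. Finally I would verify these are genuinely minimizers (not just the only candidates) by checking that every $f$ of the stated form is admissible and attains the computed value, and that the exchange argument shows no admissible competitor does better.

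I do not anticipate a serious obstacle here; the only care needed is the "up to null sets" bookkeeping in the exchange argument (one works with the distribution function / layer-cake representation of $f$ to make the mass transfer rigorous) and the observation that $\eta_\e$ being \emph{strictly} increasing from step to step is what pins down $f$ on $[0,n\e)\cup[(n+1)\e,L]$ while leaving it completely free on the single middle interval — this is precisely the "indeterminacy on small $\e$-scales" flagged in the introduction. The arithmetic of the objective value is routine.
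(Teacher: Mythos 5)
Your proof is correct and follows essentially the same route as the paper's, which is a terse version of the same bathtub/rearrangement principle (fill the lowest steps of the non-decreasing $\eta_\e$ first, leaving freedom only on the single interval $[n\e,(n+1)\e)$ where $\eta_\e$ is constant); your exchange argument just makes this precise. You are also right to question the second term of the stated minimum: with $\eta_\e(v)=k$ on $[k\e,(k+1)\e)$ the value computes to $\e\sum_{k=0}^{n-1}k + n(\rho-n\e)$, so the extra factor of $\e$ in $\e n(\rho-n\e)$ is an inconsistency (typo) in the lemma as stated, not an error in your argument.
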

\begin{proof}
$\eta_\e(v)$ is a non-decreasing function. To minimize  functional $\displaystyle{\int \eta_\e f\,dv}$ one needs to pick $f$ that has all its mass as close to $v=0$ as possible, and is less than or equal $1.$ This shows the first statement. On interval $[n\e,(n+1)\e],$  a minimizer $f_min$ can be arbitrarily re-arranged without changing the value of its $\eta_\e$ moment.   This leads to the second part of the lemma.
\end{proof}

Given a kinetic density $f$ we  select a particular minimizer of \eqref{2:min} with $\rho=\int f\,dv$ in the following way.
If $\int_{(n+1)\e}^Lf\,dv > n\e - \int_0^{n\e}f\,dv,$
we set 
\begin{equation}
\label{case:1}
M_f(v) = \id_{[0,n\e+v_0]}(v) + f(v)\id_{(n\e+v_0,(n+1)\e)}(v),
\end{equation}
where $v_0\in(0,\e)$ is determined by the relation $\int M_f\,dv{}={}\int f\,dv.$ It is the smallest number such that 
\[
\int_0^{n\e+v_0}1-f\,dv{}={}\int_{(n+1)\e}^L f\,dv.
\]

If $\int_{(n+1)\e}^Lf\,dv \geq n\e - \int_0^{n\e}f\,dv,$
we set  
\begin{equation}
\label{case:2}
M_f(v) = \id_{[0,n\e]}(v) + f(v)\id_{(n\e,n\e+v_0)}(v),
\end{equation}
where  $v_0\in(0,\e)$ is uniquely determined 
as the smallest number such that 
\[
\int_0^{n\e}1-f\,dv{}={}\int_{n\e+v_0}^L f\,dv.
\]

This minimizer can be thought of as a rearrangement of mass $f$ obtained by shifting its pieces by to the locations with smaller values of $\eta_e(v).$

The key properties of the minimizer $f_{min}$ are listed in the next lemma.
\begin{lemma}
\label{lemma:2}
 Let $f$ be any function with values in $[0,1]$  and supported on $[0,L].$ For $M_f$, defined above
\begin{equation}
\label{control:1}
\int |f-M_f|\,dv \leq \frac{2}{\e}\int \eta_\e(v)(f-M_f)\,dv.
\end{equation}
For any non-decreasing function  $\eta,$ 
\begin{equation}
\label{convex:1}
\int \eta(v) (f(v)-M_f(v))\,dv\geq 0.
\end{equation}
For any two functions $f_1,$ $f_2$  with values in $\{0,1\}$ and supported on $[0,L],$
\begin{equation}
\label{contract:1}
\int |M_{f_1} - M_{f_2}|\,dv{}\leq{}\int |f_1 - f_2|\,dv,
\end{equation}
where $M_{f_1},M_{f_2}$ are the corresponding minimizers.
\end{lemma}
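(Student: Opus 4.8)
My plan is to treat the three inequalities separately, exploiting the very explicit description of $M_f$ given in \eqref{case:1}--\eqref{case:2}. In all cases write $n=\lfloor\rho/\e\rfloor$ with $\rho=\int f\,dv$, and recall that $M_f$ differs from $f$ only on the interval $[0,(n+1)\e]$: it pushes mass of $f$ from $[(n+1)\e,L]$ (and possibly from $[n\e,(n+1)\e)$) down into the first $n$ (or $n+1$) cells, filling them to the maximal value $1$, while conserving total mass. A convenient bookkeeping device is the ``defect'' $D=\int_0^{n\e}(1-f)\,dv$ and the ``excess'' $E=\int_{(n+1)\e}^L f\,dv$; in case \eqref{case:2} one has $E\ge D$ and the rearrangement moves exactly mass $D$ from above $(n+1)\e$ down into $[0,n\e]$, whereas in case \eqref{case:1} one has $E<D$ and all of $E$ plus the mass originally in a sub-interval of $[n\e,(n+1)\e)$ is pushed into $[0,n\e+v_0]$. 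In either case $\int|f-M_f|\,dv = 2\cdot(\text{mass transported})$, since the operation is ``remove here, add there'' with equal totals and disjoint supports of the positive and negative parts of $f-M_f$.

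For \eqref{control:1}: every unit of mass that gets moved travels from a cell with index $\ge n$ down to a cell with index $\le n$, so its $\eta_\e$-value strictly decreases, and in fact decreases by at least $\e\cdot 0 \le \eta_\e(v)-\eta_\e(v')$ is not quite enough — instead I use that the transported mass sits, after the move, in cells of index $\le n-1$ with the old location at index $\ge n$, contributing at least $1$ to the drop in the integer weight $\eta_\e/\e$ per unit mass moved down past one cell boundary; more simply, $\int\eta_\e(f-M_f)\,dv \ge \e\cdot(\text{mass transported})$ because each transported unit lowers $\eta_\e$ by at least $\e$. Hence $\int|f-M_f|\,dv = 2(\text{mass transported}) \le \frac{2}{\e}\int\eta_\e(f-M_f)\,dv$, which is \eqref{control:1}. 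For \eqref{convex:1}, the argument is purely monotone rearrangement: $M_f$ is obtained from $f$ by moving mass from larger $v$ to smaller $v$, and for any non-decreasing $\eta$ such a downward transport does not increase $\int\eta f\,dv$, so $\int\eta(v)(f-M_f)\,dv\ge 0$; I would phrase this as a finite sum over transported ``parcels'', each contributing $\eta(v_{\mathrm{from}})-\eta(v_{\mathrm{to}})\ge0$.

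For the $L^1$-contraction \eqref{contract:1}, restrict to $f_i\in\{0,1\}$, so $f_i=\id_{[0,\rho_i]}$ up to a null set is \emph{not} assumed — $f_i$ is a general $\{0,1\}$-valued function — but $M_{f_i}$ still has the structure ``$\id$ on an initial block $[0,a_i]$ plus a remnant of $f_i$ on one cell.'' The cleanest route is to check that the map $f\mapsto M_f$ is, on $\{0,1\}$-valued functions, an $L^1$-nonexpansive map because it can be realized as: (a) compute $\rho_i$; (b) the output depends on $f_i$ only through $\rho_i$ on $[0,n_i\e]$ and coincides with $f_i$ on the single fractional cell. I would split $\int|M_{f_1}-M_{f_2}|\,dv$ over $[0,\min(n_1,n_2)\e]$, the cells between, and the tail, bound the contribution of the fully-filled initial blocks by $|\rho_1-\rho_2|\le\int|f_1-f_2|\,dv$ (mass conservation), and bound the single-fractional-cell remnants by their $f_i$-content, which is again dominated by the corresponding piece of $\int|f_1-f_2|$; the case analysis \eqref{case:1} vs.\ \eqref{case:2} for each $f_i$ gives finitely many sub-cases. \textbf{The main obstacle} I anticipate is precisely this last inequality: the piecewise definition of $M_f$ with the two regimes and the implicitly-defined $v_0$ makes a brute-force case split unwieldy, and the cleanest fix is to observe that $M_f$ is the $L^1$-projection of $f$ onto the (convex) set of minimizers described in Lemma \ref{lemma:1} — once that is established, \eqref{contract:1} follows from nonexpansiveness of $L^1$-projections onto, or more precisely from monotonicity of the selection, and one avoids the case analysis entirely. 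I would try to prove that characterization of $M_f$ first, and fall back on the direct estimate only if the projection property fails to hold exactly.
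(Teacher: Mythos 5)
Your handling of \eqref{control:1} and \eqref{convex:1} is essentially the paper's own argument and is correct: $f-M_f$ has disjoint positive and negative parts of equal total mass, so $\int|f-M_f|\,dv$ equals twice the transported mass, and every transported parcel moves from a cell of index at least $n$ (at least $n+1$ in case \eqref{case:1}) to a cell of strictly smaller index, so the drop in $\eta_\e$ per unit mass is bounded below and \eqref{control:1} follows; downward monotone transport gives \eqref{convex:1}. (Your bookkeeping of the two regimes is garbled --- in case \eqref{case:1} the transported mass is exactly the excess $E=\int_{(n+1)\e}^Lf\,dv$, all of it coming from $[(n+1)\e,L]$, while in case \eqref{case:2} it is the defect $D=\int_0^{n\e}(1-f)\,dv$, drawn from $[n\e+v_0,L]$ --- but this does not affect the two conclusions you actually use.)

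The genuine gap is \eqref{contract:1}. Your preferred route --- realize $M_f$ as the $L^1$-projection of $f$ onto the convex set of minimizers and invoke nonexpansiveness --- does not work for two independent reasons: metric projections onto convex sets are nonexpansive in Hilbert space, not in $L^1$ (where they are in general multivalued and expansive, so even a well-chosen selection needs a separate argument); and $M_{f_1}$, $M_{f_2}$ are projections onto \emph{different} constraint sets (the minimizer sets for $\rho_1$ and $\rho_2$), so nonexpansiveness of a single projection would not yield \eqref{contract:1} anyway. Your fallback direct estimate is only a plan: the step "bound the single-fractional-cell remnants by their $f_i$-content" is precisely where the difficulty sits, since the remnants are $f_i$ restricted to subintervals whose endpoints $v_{i,0}$ are defined implicitly and differ between the two functions, and a naive triangle inequality over the three regions double-counts. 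The paper avoids this entirely with a reduction you do not mention: because $f_1,f_2$ (hence $M_{f_1},M_{f_2}$) take values in $\{0,1\}$ and $\int M_{f_i}\,dv=\int f_i\,dv$, the identity $|g_1-g_2|=g_1+g_2-2g_1g_2$ shows \eqref{contract:1} is equivalent to $\int f_1f_2\,dv\le\int M_{f_1}M_{f_2}\,dv$, i.e.\ the rearrangement can only increase the overlap; this is then verified in a short case analysis ($n_1>n_2$; $n_1=n_2$ with the possible combinations of \eqref{case:1}/\eqref{case:2} and orderings of $v_{1,0},v_{2,0}$). Without that reduction, or a genuinely carried-out remnant comparison, your proof of the third inequality is incomplete.
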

\begin{proof} 
Let $n$ be as in the previous lemma. Consider case \eqref{case:1}
\begin{multline}
\int |f-M_f|\,dv {}={}\int_0^{n\e+v_0} 1-f\,dv{}+{}\int_{(n+1)\e}^L f\,dv\\{}={}2\int_{(n+1)\e}^L f\,dv
{}\leq{}\frac{2}{\e}\int\eta_\e(v)(f-M_f)\,dv,
\end{multline}
where the last inequality holds since all mass of $f$ on interval $[(n+1)\e,L]$ has been removed from that interval.
Similarly, in case \eqref{case:2}
\begin{multline}
\int |f-M_f|\,dv {}={}\int_0^{n\e} 1-f\,dv{}+{}\int_{n\e+v_0}^L f\,dv\\{}={}2\int_{n\e+v_0}^L f\,dv
{}\leq{}\frac{2}{\e}\int\eta_\e(v)(f-M_f)\,dv.
\end{multline}

For  a non-decreasing function $\eta$, \eqref{convex:1} follows from the definition of $M_f.$

To prove \eqref{contract:1} it suffices to show that
\begin{equation}
\label{contract:2}
\int f_1 f_2\,dv{}\leq{}\int M_{f_1}M_{f_2}\,dv,
\end{equation}
since functions take only values $0$ or $1.$ Let $n_1,v_{1,0}$ and $n_2,v_{2,0}$ be the corresponding values of $n$ and $v_0$ from \eqref{case:1}, \eqref{case:2} for functions $f_1$ and $f_2.$

Consider the case $n_1> n_2$ first. Here
\[
\int M_{f_1}M_{f_2}\,dv {}={}\int_0^{(n_2+1)\e}M_{f_2}\,dv{}={}\int f_2\,dv\geq \int f_1 f_2\,dv.
\]
Next, consider the case $n_1=n_2$ ($=n$).
Suppose that representation \eqref{case:1} applies to both functions $f_1,f_2,$ and assume  $v_{1,0}\geq v_{2,0}.$
Then,
\[
\int M_{f_1}M_{f_2}\,dv {}\geq{}\int_{n\e+v_{1,0}}^{(n+1)\e}f_1f_2\,dv
+\int_0^{n\e+v_{1,0}}f_2\,dv {}+{}\int_{(n+1)\e}^L f_2\,dv{}\geq \int f_1 f_2\,dv.
\]
Suppose that representation \eqref{case:2} applies to both functions $f_1,f_2,$ and assume  $v_{1,0}\geq v_{2,0}.$
Then,
\[
\int M_{f_1}M_{f_2}\,dv {}\geq{} \int_0^{n\e} f_2\,dv{}+{}
\int_{n\e+v_{2,0}}^Lf_2\,dv{}+{}\int_{n\e}^{n\e+v_{2,0}}f_1f_2\,dv{}\geq{}\int f_1f_2\,dv.
\]
Suppose that \eqref{case:2} applies to function $f_1$ and \eqref{case:1} to $f_2.$ If $v_{1,0}\geq v_{2,0}$ then
\[
\int M_{f_1}M_{f_2}\,dv {}\geq{} \int_0^{n\e+v_{2,0}} f_1\,dv{}+{}
\int_{n\e+v_{1,0}}^Lf_1\,dv{}+{}\int_{n\e+v_{2,0}}^{n\e+v_{1,0}}f_1f_2\,dv{}\geq{}\int f_1f_2\,dv.
\]
If $v_{1,0}<v_{2,0}$ then
\[
\int M_{f_1}M_{f_2}\,dv {}\geq{} \int_0^L f_1\,dv{}\geq{}\int f_1f_2\,dv.
\]
The contraction property \eqref{contract:1} is proved now.
\end{proof}

Now we consider a discrete-time approximation, with time step $h>0$ and $t_n=nh,$ $n=0,1,2...$ Given $f_{n-1}(x,v)$ the next period kinetic function 
\[
f_{n}(x,v) = M_{\hat{f_{n}}},\quad \hat{f_n}(x,v) = f_{n-1}(x-A'(v)h),
\]
with $f_0$ being the initial data.
A continuous time approximate is defined as
\begin{equation}
\label{cont:approx}
f^h(x,v,t){}={}\left\{
\begin{array}{ll}
f_{n-1}(x-A'(v)(t-nh)), & t\in[(n-1)h,nh),\\
f_n(x,v), & t=nh.
\end{array}
\right.
\end{equation}
\begin{remark}
It can be easily seen that in dimension one, if initial data $f_0$ is such that $f_0(x,v)=1,$ for $0\leq v\leq k\e$ and $f_0(x,v)=0$ for $v>((k+1)\e$ then $f_n$ is evolved by simple translation with kinetic velocities $v,$ leading to dispersion effect. On the other hand if initial data, for example, has a form
	\[
	f_0(x,v) = \left\{
	\begin{array}{ll}
	\id_{[0,v_1]}(v),& x<0,\\
	\id_{[0,v_2]}(v), & x>0,
	\end{array}
	\right.
	\] 
with $v_1-v_2>\e$ and $A(v)=v$ (corresponding to Burger's equation) then $f_n$ evolves as a classical shock wave in a discrete-time approximation. 	
\end{remark}

The following properties of $f^h$ follow easily from its definition and properties established in lemma \ref{lemma:2}.
\begin{lemma}
\label{lemma:3}
It holds:
\begin{enumerate}
\item[i.] for any $(x,v,t),$ $f^h\in\{0,1\};$
\item[ii.] for any $(x,t),$ ${\rm supp }f^h\subset [0,L];$
\item[iii.] for any $t>0,$
\begin{equation}
\iint f^h(x,v,t)\,dvdx{}\leq{}f_0(x,v)\,dvdx;
\end{equation}
\begin{equation}
\iint \eta_\e(v)f^h(x,v,t)\,dvdx{}\leq{}\eta_\e(v)f_0(x,v)\,dvdx;
\end{equation}
\item[iv.] $f^h$ is a weak solution of the equation
\begin{equation}
\label{eq:discrete}
\partial_tf^h{}+{}A'(v)\cdot\grad_xf^h{}={}R^h,
\end{equation}
where
\begin{equation}
R^h{}={}\sum_{n=1}^\infty \delta(t-nh)(f_n(x,v) - f_{n-1}(x-A'(v)h));
\end{equation}
\item[v.] for any $t>0$ and any $\xi\in\mathbb{R}^d,$
\[
\iint |f^h(x+\xi,t,v)-f^h(x,t)|\,dxdv {}\leq{}\iint|f_0(x+\xi,v)-f_0(x,v)|\,dxdv.
\]
\end{enumerate}
\end{lemma}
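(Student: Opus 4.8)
The plan is to establish all five items by induction on the time index $n$, using only the explicit form of the selected minimizer in \eqref{case:1}--\eqref{case:2}, Lemmas \ref{lemma:1} and \ref{lemma:2}, and the elementary facts that the map $g\mapsto g(\cdot-A'(v)(t-s),v)$ solves the free transport equation $\partial_tg+A'(v)\cdot\grad_xg=0$ in the distributional sense, preserves $\iint(\cdot)\,dvdx$, and commutes with translations in the $x$--variable. For i and ii it suffices to check that the transport step $f_{n-1}\mapsto\hat f_n$ and the projection step $\hat f_n\mapsto M_{\hat f_n}$ both preserve the range $\{0,1\}$ and the $v$--support $[0,L]$: transport in $x$ is immediate, while in case \eqref{case:1} one has $M_f=\id_{[0,n\e+v_0]}+f\,\id_{(n\e+v_0,(n+1)\e)}$ on disjoint $v$--intervals, so $M_f\in\{0,1\}$ whenever $f\in\{0,1\}$, with $\supp M_f\subset[0,(n+1)\e]\subset[0,L]$ after the harmless normalization $L\in\e\mathbb Z$, since $f\le1$ forces $n\e\le\int f\,dv\le L$ (and $\rho=L$ gives $M_f=\id_{[0,L]}$); case \eqref{case:2} is identical. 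Item iii then follows because the threshold $v_0$ is chosen so that $\int M_f\,dv=\int f\,dv$ pointwise in $x$, making the total mass an exact invariant, while \eqref{convex:1} applied with the non-decreasing function $\eta_\e$ to each $x$--slice $\hat f_n(x,\cdot)$, combined with the $x$--translation invariance of $\iint\eta_\e f\,dvdx$, gives $\iint\eta_\e f_n\le\iint\eta_\e f_{n-1}\le\dots\le\iint\eta_\e f_0$; the transported intermediate values inherit all these bounds, and the displayed $f_0(x,v)\,dvdx$ on the right is understood with the integral signs $\iint$.

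For item iv I would test \eqref{eq:discrete} against $\varphi\in C_c^\infty(\mathbb R^{d+1}_+\times[0,L])$ and split the time integral over the intervals $((n-1)h,nh)$, on each of which $f^h(\cdot,t)$ is the free transport of $f_{n-1}$ from $t=(n-1)h$, hence a distributional solution of the homogeneous transport equation; integrating by parts in $t$ leaves only the spatial boundary terms at the endpoints, and at each interior node $t=nh$ the incoming value $\hat f_n=f_{n-1}(\cdot-A'(v)h,v)$ and the outgoing value $f_n$ combine into $\iint(\hat f_n-f_n)\varphi(\cdot,nh)\,dvdx$, which is exactly the pairing of $-R^h$ with $\varphi$ for $R^h=\sum_{n\ge1}\delta(t-nh)(f_n-\hat f_n)$, while the node $t=0$ reproduces the initial datum $f_0$.

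Item v is an $L^1$--stability induction: writing $T_\xi g(x,\cdot)=g(x+\xi,\cdot)$, the transport step commutes with $T_\xi$, so a change of variables in $x$ gives $\iint|T_\xi\hat f_n-\hat f_n|\,dvdx=\iint|T_\xi f_{n-1}-f_{n-1}|\,dvdx$; then \eqref{contract:1} applied to the $v$--slices $\hat f_n(x+\xi,\cdot)$ and $\hat f_n(x,\cdot)$ — admissible precisely because items i and ii make these $\{0,1\}$--valued and supported on $[0,L]$ — yields $\int|f_n(x+\xi,v)-f_n(x,v)|\,dv\le\int|\hat f_n(x+\xi,v)-\hat f_n(x,v)|\,dv$ a.e.\ in $x$, and integrating in $x$ and iterating gives $\iint|T_\xi f_n-f_n|\le\iint|T_\xi f_0-f_0|$; at non-mesh times $f^h(\cdot,t)$ is a $v$--dependent $x$--translate of $f_{n-1}$, so the bound persists for all $t$. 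The argument is routine; the only genuinely structural point is that item v rests on the $\{0,1\}$--range hypothesis of \eqref{contract:1}, so i (and ii) must be proved first, and the only other places needing a word of care are the $v$--support accounting in ii (settled by $L\in\e\mathbb Z$ together with $\int f\,dv\le L$) and the measurability of $x\mapsto M_{\hat f_n(x,\cdot)}$ used in iii and v, which holds because the threshold $v_0$ in \eqref{case:1}--\eqref{case:2} depends measurably on the $x$--slice through an integral identity. I do not anticipate any real obstacle beyond this.
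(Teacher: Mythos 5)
Your proof is correct and fills in, by the natural induction over time steps, exactly the argument the paper leaves implicit when it says the properties ``follow easily'' from the definition of $f^h$ and Lemma \ref{lemma:2}: range and support preservation from the explicit form of \eqref{case:1}--\eqref{case:2}, mass/entropy monotonicity from the choice of $v_0$ and \eqref{convex:1}, the jump-measure bookkeeping for iv, and slice-wise application of \eqref{contract:1} for v. The only point worth flagging is that you (correctly) read the interpolation \eqref{cont:approx} as transport by $A'(v)(t-(n-1)h)$ on $[(n-1)h,nh)$, which is evidently what the paper intends despite the $t-nh$ appearing in the displayed formula.
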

Next, we estimate the interaction term $R^h$ in equation \eqref{eq:discrete}
\begin{lemma}For any $t>0,$
\[
\iint R^h\,dvdx{}\leq{}\sum_{n=1}^\infty \delta(t-nh)\iint|(f_n(x,v) - f_{n-1}(x-A'(v)h))|\,dxdv;
\]
and 
\[
\int_0^\infty \iint |R^h|\,dvdxdt{}\leq\frac{2}{\e}\iint\eta_\e(v)f_0(x,v)\,dvdx.
\]
\end{lemma}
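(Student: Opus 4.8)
The plan is to prove both inequalities by bookkeeping the mass transferred at each time step, using the $L^1$-to-entropy-increment bound \eqref{control:1} from lemma \ref{lemma:2}. The first inequality is essentially immediate from the definition of $R^h$ in lemma \ref{lemma:3}(iv): since $R^h$ is a sum of Dirac masses in $t$ at the points $t_n=nh$, we have $\iint R^h\,dvdx = \sum_n \delta(t-nh)\iint (f_n - f_{n-1}(\cdot-A'(v)h))\,dxdv$, and bounding the inner integral by the integral of the absolute value gives the claim. (Here I would remark that the equality $\iint R^h\,dvdx = \sum_n\delta(t-nh)\iint(f_n-\hat f_n)\,dvdx$ holds with $\hat f_n(x,v)=f_{n-1}(x-A'(v)h)$, and passing to $|\cdot|$ only makes the right side larger.)

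For the second, stronger inequality I would first estimate the mass moved at a single step. Fix $n$. Since $f_n(x,\cdot) = M_{\hat f_n(x,\cdot)}$ pointwise in $x$, lemma \ref{lemma:2}, inequality \eqref{control:1}, applied for each fixed $x$ to the function $f=\hat f_n(x,\cdot)$, gives
\begin{equation}
\int |f_n(x,v)-\hat f_n(x,v)|\,dv \leq \frac{2}{\e}\int \eta_\e(v)\bigl(\hat f_n(x,v)-f_n(x,v)\bigr)\,dv.
\end{equation}
Integrating in $x$ and noting that $\int \eta_\e(v)\hat f_n(x,v)\,dv$, integrated over $x$, equals $\iint \eta_\e(v) f_{n-1}(x,v)\,dvdx$ by the change of variables $x\mapsto x+A'(v)h$ (translation invariance of the $x$-integral, $\eta_\e$ depending only on $v$), we obtain
\begin{equation}
\iint |f_n - \hat f_n|\,dvdx \leq \frac{2}{\e}\left(\iint \eta_\e(v) f_{n-1}\,dvdx - \iint \eta_\e(v) f_n\,dvdx\right).
\end{equation}

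Finally I would integrate the first inequality in $t$ over $(0,\infty)$: each Dirac $\delta(t-nh)$ contributes exactly $\iint|f_n-\hat f_n|\,dvdx$, so
\begin{equation}
\int_0^\infty \iint |R^h|\,dvdxdt \leq \sum_{n=1}^\infty \iint |f_n - \hat f_n|\,dvdx \leq \frac{2}{\e}\sum_{n=1}^\infty \left(E_{n-1} - E_n\right),
\end{equation}
where $E_n := \iint \eta_\e(v) f_n\,dvdx$. This is a telescoping sum; since $E_n\geq 0$ for all $n$ and the sequence $E_n$ is non-increasing (lemma \ref{lemma:3}(iii), or directly from \eqref{convex:1}), the partial sums are bounded by $E_0 = \iint \eta_\e(v) f_0\,dvdx$, giving the stated bound. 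The only point requiring a little care — the main (minor) obstacle — is the translation-invariance step identifying $\iint\eta_\e(v)\hat f_n\,dvdx$ with $\iint\eta_\e(v)f_{n-1}\,dvdx$: this uses that $\eta_\e$ is a function of $v$ alone, so the $x$-shift by $A'(v)h$ leaves the $dx$-integral unchanged for each fixed $v$, and then one applies Fubini; one should also note $f_n\in\{0,1\}$ so all integrals are finite on compact $x$-supports (and the telescoping is legitimate because every term is finite).
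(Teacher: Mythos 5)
Your proof is correct and follows essentially the same route as the paper: apply \eqref{control:1} pointwise in $x$ with $f=\hat f_n$ and $M_f=f_n$, use translation invariance of the $x$-integral to identify $\iint\eta_\e \hat f_n\,dvdx$ with $\iint\eta_\e f_{n-1}\,dvdx$, and telescope the resulting entropy increments down to $\iint\eta_\e f_0\,dvdx$. Your write-up is in fact slightly more careful than the paper's (which leaves the translation-invariance and telescoping steps implicit and has a sign slip in its displayed entropy inequality), but there is no substantive difference in approach.
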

\begin{proof}
The first inequality is obvious. Using equation \eqref{eq:discrete} we find that
\[
\sum_{n=1}^\infty \iint \eta_\e(v)(f_n(x,v)-f_{n-1}(x-A'(v)h,v))\,dxdv{}\leq{} \iint \eta_\e(v)f_0(x,v)\,dxdv.
\]
Since $f_n{}={}M_{f_{n-1}(x-A'(v)h,v)},$ using inequality \eqref{control:1} we get
\[
\sum_{n=1}^\infty \iint |f_n(x,v)-f_{n-1}(x-A'(v)h,v))|,dxdv{}\leq{} \frac{2}{\e}\iint \eta_\e(v)f_0(x,v)\,dxdv,
\] 
from which the second inequality of the lemma follows.
\end{proof}

With the information from the last two lemma, we consider compactness properties of $f^h$ as $h\to0.$ There is $f$ with a.e. values in $[0,1]$ and a signed Radon measure $\tilde{m}$ such that on a suitable subsequence $h_k\to0,$
\begin{align*}
f^{h_k}\to f\quad \mbox{\rm *-weakly in } L^\infty(\mathbb{R}^{d+1}_+\times[0,L]),\\
 R^{h_k}\to \tilde{m}\quad  \mbox{\rm *-weakly in } \mathcal{M}_{loc}(\mathbb{R}^{d+1}_+\times[0,L]),
\end{align*}
 for a.e. $t>0,$
\begin{align*}
\iint f(x,v,t)\,dvdx{}\leq{}\iint f_0(x,v)\,dvdx,\\
\iint \eta_\e(v)f(x,v,t)\,dvdx{}\leq{}\iint \eta_\e(v)f_0(x,v)\,dvdx,
\end{align*} 
and inequalities \eqref{entropy:control} and \eqref{th:continuity} hold.

Inequality \eqref{convex:1} implies that $\langle \tilde{m},\eta(v)\psi(x,t)\rangle{}\leq{} 0$ for any continuously non-decreasing function $\eta,$ and any non-negative $\psi\in C^\infty_0(\mathbb{R}^{d+1}_+).$ Thus, $\tilde{m}{}={}\partial_v m$ for a non-negative Radon measure.

To complete the proof of theorem \ref{th:1} it remain to establish
\eqref{min:theorem}. For that we first show that $v$--moments of $f^h$ are compact in $L^p$ norms.

\begin{lemma}
Let $\omega(v)$ be a measurable, bounded function on $[0,L].$ Then, the set of moments
\[
\left\{ \int \omega(v)f^h(x,v,t)\,dv\right\} \quad \mbox{pre-compact in } L^p_{loc}(\mathbb{R}^{d+1}_+),\,p\in[0,+\infty).
\]	
\end{lemma}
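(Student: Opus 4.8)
The plan is to prove pre-compactness of the $v$--moments by combining the equi-continuity in $x$ (uniform in $h$) from Lemma~\ref{lemma:3}.v with an equi-continuity in $t$ extracted from the kinetic equation~\eqref{eq:discrete} and the bound on the interaction term $R^h$, then invoke a Fréchet--Kolmogorov (Riesz) compactness criterion in $L^p_{loc}(\mathbb{R}^{d+1}_+)$. Write $g^h(x,t){}={}\int\omega(v)f^h(x,v,t)\,dv$. Since $|f^h|\le 1$ and $\supp f^h\subset[0,L]$, the family $\{g^h\}$ is uniformly bounded in $L^\infty$, hence it suffices to establish $L^1_{loc}$ pre-compactness; the $L^p$ statement for $p\in[1,\infty)$ follows by interpolation with the uniform $L^\infty$ bound.

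For the spatial translates, I would multiply the inequality in Lemma~\ref{lemma:3}.v by $\|\omega\|_\infty$ to get
\[
\iint |g^h(x+\xi,t)-g^h(x,t)|\,dxdt{}\leq{}\|\omega\|_\infty\,(T+1)\iint|f_0(x+\xi,v)-f_0(x,v)|\,dxdv
\]
on any strip $t\in[0,T]$, which tends to $0$ as $\xi\to0$ uniformly in $h$ because $f_0\in L^1$. For the time translates, I would test~\eqref{eq:discrete} against $\omega(v)\psi(x)$ with $\psi\in C^\infty_0(\mathbb{R}^d)$: this gives, in the sense of distributions in $t$,
\[
\frac{d}{dt}\int g^h(x,t)\psi(x)\,dx{}={}\int\!\!\int \omega(v)\,A'(v)\cdot\grad_x\psi(x)\,f^h\,dx\,dv{}+{}\langle R^h,\omega(v)\psi(x)\rangle .
\]
The first term on the right is bounded in $L^\infty_t$ uniformly in $h$ (by $\|\omega\|_\infty\|A'\|_{L^\infty[0,L]}\|\grad\psi\|_{L^1}\cdot\|f_0\|_{L^1}$, using mass monotonicity), and the second is a measure in $t$ with total mass controlled by $\|\omega\|_\infty\|\psi\|_\infty\int_0^\infty\!\!\iint|R^h|\,dv\,dx\,dt\le \frac{2}{\e}\|\omega\|_\infty\|\psi\|_\infty\iint\eta_\e(v)f_0\,dv\,dx$. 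Hence $t\mapsto\int g^h\psi\,dx$ has uniformly bounded variation on compact time intervals, so it is pre-compact in $L^1_{loc}(0,\infty)$ for each fixed $\psi$; a density argument over a countable set of test functions $\psi$, together with the spatial equi-continuity and the uniform $L^\infty$ bound, upgrades this to equi-continuity of $\tau\mapsto g^h(\cdot,\cdot+\tau)$ in $L^1_{loc}(\mathbb{R}^{d+1}_+)$, uniformly in $h$.

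The main obstacle is that the interaction term $R^h$ is only a locally finite measure in time — a countable sum of Dirac masses at $t=nh$ — so one does not get equi-continuity of $g^h(\cdot,t)$ in $L^1_x$ for \emph{every} $t$, only bounded-variation control of the scalar quantities $\int g^h\psi\,dx$. The resolution is precisely that the total mass of these Diracs is bounded \emph{uniformly in $h$} by $\frac{2}{\e}\iint\eta_\e(v)f_0\,dv\,dx$, so although $g^h$ may jump, the cumulative size of all jumps up to time $T$ is $O(1)$; combined with the absolutely continuous (indeed Lipschitz) part coming from transport, this yields the $L^1_{loc}$ time-equi-continuity needed for Fréchet--Kolmogorov. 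Once that criterion is verified, pre-compactness in $L^1_{loc}(\mathbb{R}^{d+1}_+)$, and therefore in $L^p_{loc}$ for all $p\in[1,\infty)$ by the uniform $L^\infty$ bound, follows. (The statement as written also covers $p=0$, i.e.\ convergence in measure, which is then immediate.) This lemma will in turn let one pass to the limit in the nonlinear constraint defining $M_{f^h}$ and thereby obtain the optimality property~\eqref{min:theorem} for the limit $f$.
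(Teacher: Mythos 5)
Your proof is correct, and it uses the same two inputs as the paper --- the spatial $L^1$-equicontinuity from lemma \ref{lemma:3}.v and the control of $\partial_t\!\int\omega f^h\,dv$ coming from equation \eqref{eq:discrete} together with the uniform bound on $\int|R^h|$ --- but it closes the argument differently. The paper feeds these two facts into a ready-made compactness lemma (lemma 5.1 of Lions, cited as \cite{LionsII}), which yields $(\rho^h_\omega)^2\to(\rho_\omega)^2$ in the sense of distributions and hence strong convergence of the weak-$*$ limit; you instead assemble a Fr\'echet--Kolmogorov argument by hand, deriving uniform-in-$h$ bounded variation of $t\mapsto\int g^h\psi\,dx$ and upgrading it to $L^1_{loc}$ time-translate equicontinuity. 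Your route is more self-contained and gives a slightly more explicit conclusion (genuine translation equicontinuity in $(x,t)$, not just convergence of squares). The one step you leave compressed is the ``upgrade'': the clean way to carry it out is to mollify in $x$, writing $g^h_\delta(x,t)=\int g^h(y,t)\chi_\delta(x-y)\,dy$, so that for fixed $\delta$ the BV-in-$t$ estimate applies with $\psi=\chi_\delta(x-\cdot)$ uniformly in $x$, giving $\int_0^T\!\!\int_K|g^h_\delta(x,t+\tau)-g^h_\delta(x,t)|\,dx\,dt\leq C_\delta\,\tau$, while the spatial equicontinuity controls $\|g^h_\delta-g^h\|_{L^1}$ uniformly in $h$; choosing $\delta$ first and then $\tau$ finishes the argument. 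A countable family of test functions alone would only give compactness in a weak topology in $x$, so the mollifier (or an equivalent device exploiting the spatial estimate) is genuinely needed there. With that step made explicit, your proof is complete and interchangeable with the paper's.
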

\begin{proof}
 Denote by $\rho^h_{\omega}{}={}\int \omega(v)f^h(x,v,t)\,dv.$
$\rho^h_{\omega}$ is bounded in $L^\infty(\mathbb{R}^{d+1}_+).$ It follow from part v. of lemma \ref{lemma:3} that for any $\xi\in\mathbb{R}^d,$ and any $T>0,$ and $p\in[1,+\infty),$
\[
\| \rho^h_\omega(x+\xi,t)-\rho^h_{\omega}(x,t)\|_{L^\infty((0,T); L^p(\mathbb{R}^d))}\to0,\quad |\xi|\to0,
\] 
uniformly in $h.$ It follow from equation \eqref{eq:discrete} that for any $T>0$ and $p\in[0,+\infty),$ 
\[
\left\{\partial_t \rho^h_\omega\right\} \quad \mbox{bounded in }\mathcal{M}((0,T);L^p(\mathbb{R}^d)) {}+{} L^\infty((0,T); W^{-1,p}_{loc}(\mathbb{R}^d)).
\]
Under these conditions, compactness lemma 5.1 of Lions \cite{LionsII} ensures that on a suitable sequence of values of $h\to 0,$ $(\rho^h_\omega)^2\to (\rho_\omega)^2$ in distributional sense, where $\rho_\omega$ is a limiting point of $\rho^h_\omega$ in *-weak topology of $L^\infty(\mathbb{R}^{d+1}_+).$ This implies the statement of the lemma. 
\end{proof}

A little bit more can be said about moments $\rho^h{}={}\int f^h(x,v,t)\,dv.$ Indeed, 
\[
\{\partial_t\rho^h \} \quad \mbox{bounded in }L^\infty((0,T); W^{-1,p}_{loc}(\mathbb{R}^d)),\,p\in[1,\infty).
\]
Thus, $\rho^h$ converges  for a limiting point $\rho,$ in $ C([0,T]; W^{-1,p}_{loc}(\mathbb{R}^d)).$ This shows, in  particular, that  $\rho(x,0){}={}\int f_0(x,v)\,dv.$

We consider the moments of $f^h$ from the set $\omega\in\{1,\eta_\e(v),A_1(v),..,A_d(v)\}$ and select a sequence $h=h_k\to0$ on which $f^h$ and $\rho^h_\omega$ converge in the topologies described above to their limiting values.

 To finish the proof of theorem \ref{th:1} it remain to establish \eqref{control:1}.

 Consider a piece-wise constant in time interpolation of functions $f_n(x,v):$
\[
\tilde{f}^h(x,t,v){}={}
f_{n-1}(x,v),\quad  t\in[(n-1)h,nh),\quad n=1,2,3...
\]
For any  $\psi\in C^\infty_0(\mathbb{R}^{d+1}_+\times[0,L]),$ using the definition of function $f^h$ we find that
\begin{multline*}
\iiint \psi(f^h-\tilde{f}^h)\,dvdxdt\\ {}={}
\sum_{n=1}^\infty \int_{(n-1)h}^{nh}(\psi(x,t,v)-\psi(x+A'(v)(t-(n-1)h),t,v)f_{n-1}\,dvdxdt{}={}O(h).
\end{multline*}
Thus  $\tilde{f}^h$ converges the same $f$ in *-weak topology of $L^\infty(\mathbb{R}^{d+1}_+\times[0,L]).$ Furthermore, there is another sequence $\hat{f}^h$ constructed by taking suitable convex linear combinations of a finite number of elements of $\{\tilde{f}^h\}$ that converges to $f$ in $L^p_{loc}()$ and a.e. $(x,t,v).$

Let $\hat{\rho}^h=\int \hat{f}^h\,dv.$ For each $(x,t),$  $\hat{f}^h(x,t,v)$ is a minimizer of the problem \eqref{min:theorem} with $\rho=\hat{\rho}^h(x,t).$ Since this problem depends continuously on the value of the constraint $\hat{\rho}^h$ and the latter converges a.e. $(x,t)$ to $\rho(x,t),$ then the limit of the minimizers $\tilde{f}^h$ is a minimizer corresponding to $\rho.$

\subsection{Proof of theorem \ref{th:2}}

Part i. of the theorem \ref{th:2} was established in proving theorem \ref{th:1}. Part ii. follows from from \eqref{min:theorem} and lemma \ref{lemma:1}. Indeed, let $\rho,$ and $\phi$ be given by \eqref{moments:2}, and $(x,t)$ is such that  $f(x,t,\cdot)$ is the minimizer of \eqref{min:theorem}. Let $n$ and $\tilde{f}$ be as in lemma \ref{lemma:1}. We can write for any $i=1..d,$
\begin{multline*}
\phi_i(x,t){}={}\int A_i'(v)f(x,t,v)\,dv{}={}
A_i(\rho(x,t)){}+{}\int_{n\e}^{(n+1)\e} A_i'(v)\left(\tilde{f}-\id_{[0,\rho]}(v)\right)\,dv\\
{}={}A_i(\rho(x,t)){}+{}\int_{n\e}^{(n+1)\e} \left( A_i'(v)-A_i'(n\e)\right)\left(\tilde{f}-\id_{[0,\rho]}(v)\right)\,dv\\
{}={}A_i(\rho(x,t)){}+{}O(\e^2),
\end{multline*}
which establishes \eqref{shock:speed:1}.

To show part iii. of the theorem we consider the sequence of kinetic functions $f^\e$ and their moments $\rho^e=\int f^\e\,dv,$ $\phi^\e_i{}={}\int A_i'(v)f^\e\,dv$ from theorem \ref{th:1} in the limit $\e\to0.$ Given the uniform bounds on the sequence  $f^\e$, continuity estimate \eqref{th:continuity} and equation one can repeat the arguments of the proof of theorem \ref{th:1} to establish that $v$--moments of $f^\e$ are pre-compact in $L^p_{loc}(\mathbb{R}^{d+1}_+)$ and (one a subsequence) converge to a pair $(\rho,\phi)$ -- a solution of \eqref{part:1}, while $f^\e$ itself converges weakly to a function that $f$ that verifies the kinetic equation \eqref{part:2} (but not \eqref{entropy:control}) and which a.e. $(x,t)$ is a minimizer of problem \eqref{min:theorem} with function $\eta(v)=v,$ in place of $\eta_\e.$ This new problem
\[
\min\left\{ \int \eta(v)f(v)\,dv\,:\, f(v)\in[0,1],\,\int f\,dv{}={}\rho(x,t)\right\}
\]
has a unique minimizer in the form $f(x,v,t){}={}\id_{[0,\rho(x,t)]}(v).$
Thus, $\phi(x,t) {}={}A(\rho(x,t))$ a.e. $(x,t)$ and $\rho$ is a unique entropy solution of the conservation law \eqref{SCL}. The uniqueness implies that the sequence $\rho^\e$ converges to $\rho$ in the limit of $\e\to0.$

\bibliographystyle{siamplain}

\bibliography{references}

\end{document}